\newtheorem{theorem}{Theorem}[section]
\newtheorem{corollary}[theorem]{Corollary}
\newtheorem{proposition}[theorem]{Proposition}
\newtheorem{lemma}[theorem]{Lemma}
\newtheorem*{theorem*}{Theorem}
\theoremstyle{definition}
\def\sideremark#1{\ifvmode\leavevmode\fi\vadjust{
		\vbox to0pt{\hbox to 0pt{\hskip\hsize\hskip1em
				\vbox{\hsize3cm\tiny\raggedright\pretolerance10000
					\noindent #1\hfill}\hss}\vbox to8pt{\vfil}\vss}}}
\newcommand{\R}{{\mathbb R}}
\renewcommand{\H}{\mathrm{H}}
\newcommand{\beq}{\begin{equation}}
\newcommand{\eeq}{\end{equation}}
\renewcommand{\a}{\alpha}
\renewcommand{\b}{\beta}
\newcommand{\f}{\varphi}
\renewcommand{\o}{\omega}
\newcommand{\U}{{\mathrm U}}
\newcommand{\SU}{{\mathrm{SU}}}
\newcommand{\SO}{{\mathrm {SO}}}
\newcommand{\G}{{\mathrm G}}
\newcommand{\Sg}{{\mathrm S}}
\newcommand{\M}{{\mathrm M}}
\newcommand{\W}{\wedge}
\newcommand{\Diff}{\mathrm{Dif{}f}}
\DeclareMathOperator\Aut{Aut}
\DeclareMathOperator\Ad{Ad}
\DeclareMathOperator\ad{ad}
\DeclareMathOperator{\Span}{Span}
\newcommand{\frg}{\mathfrak{g}}
\newcommand{\frh}{\mathfrak{h}}
\newcommand{\ga}{\mathfrak{a}}
\newcommand{\gb}{\mathfrak{b}}
\renewcommand{\gg}{\mathfrak{g}}
\newcommand{\gh}{\mathfrak{h}}
\newcommand{\gk}{\mathfrak{k}}
\newcommand{\gl}{\mathfrak{l}}
\newcommand{\gm}{\mathfrak{m}}
\newcommand{\gn}{\mathfrak{n}}
\newcommand{\gp}{\mathfrak{p}}
\newcommand{\gq}{\mathfrak{q}}
\newcommand{\gs}{\mathfrak{s}}
\newcommand{\gu}{\mathfrak{u}}
\newcommand{\gv}{\mathfrak{v}}
\newcommand{\gz}{\mathfrak{z}}
\newcommand{\so}{\mathfrak{so}}
\newcommand{\su}{\mathfrak{su}}
\newcommand{\gsl}{\mathfrak{sl}}
\newcommand{\oZ}{\omega_{\sst Z}}
\newcommand{\hX}{{\widehat{X}}}
\newcommand{\hY}{{\widehat{Y}}}
\newcommand{\hZ}{{\widehat{Z}}}
\newcommand{\hV}{{\widehat{V}}}
\newcommand{\hA}{{\widehat{A}}}
\newcommand{\st}{\ |\ }
\newcommand{\sst}{\scriptscriptstyle}
\numberwithin{equation}{section}
\title[Closed G$_2$-structures with a transitive reductive group of automorphisms]{Closed G$_{\mathbf2}$-structures with a transitive reductive group of automorphisms}
\author{Fabio Podest\`a}
\address{Dipartimento di Matematica e Informatica ``U.~Dini'' \\ Universit\`a degli Studi di Firenze\\ Viale Morgagni 67/a\\ 50134 Firenze\\ Italy}
\email{fabio.podesta@unifi.it}
\author{Alberto Raffero}
\address{Dipartimento di Matematica ``G. Peano'', Universit\`a  degli Studi di Torino, Via Carlo Alberto 10, 10123 Torino, Italy} 
\email{alberto.raffero@unito.it}
\subjclass[2020]{53C10, 57S20} 
\keywords{closed $\G_2$-structure, automorphism}
\thanks{The authors were supported by GNSAGA of INdAM and by the project PRIN 2017 ``Real and Complex Manifolds: Topology, Geometry and Holomorphic Dynamics''}
\begin{document}

\maketitle
\begin{center}
{\footnotesize {\em Dedicated to Dmitri V.~Alekseevsky on the occasion of his 80th birthday}}
\end{center}
\begin{abstract} 
We provide the complete classification of seven-dimensional manifolds endowed with a closed non-parallel $\G_2$-structure and admitting a transitive reductive group $\G$ of automorphisms. 
In particular, we show that the center of $\G$ is one-dimensional and the manifold is the Riemannian product of a flat factor and a  non-compact homogeneous six-dimensional manifold endowed with 
an invariant strictly symplectic half-flat SU(3)-structure. 
\end{abstract}

\section{Introduction}
 
A closed $\G_2$-structure on a seven-dimensional manifold $\M$ is given by a definite 3-form $\f$ satisfying the condition $d\f=0$. 
Closed $\G_2$-structures appeared in \cite{FeGr} as one of the sixteen natural classes of $\G_2$-structures, 
and 7-manifolds endowed with these structures provide a fruitful setting where to construct metrics with holonomy $\G_2$ 
(see e.g.~\cite{Bry, CHNP, Joy, JoKa, Kov, KoLe,Lot}). 
However, very little is currently known about general properties of these manifolds, see for instance \cite{Bry, ClIv} for curvature properties, \cite{CoFe,Fer1,Fer2,FFM,FiRa,Lau,LaNi1,LaNi2} 
for examples consisting of Lie groups with left-invariant closed G$_2$-structures, and \cite{FFKM} for a compact example obtained resolving the singularities of a 7-orbifold. 

In our previous work \cite{PR1}, we investigated the properties of the automorphism group $\Aut(\M,\f)\coloneqq \left\{ f\in\Diff(\M) \st f^*\f=\f \right\}$ 
when $\M$ is compact and the closed $\G_2$-structure $\f$ is not parallel with respect to the Levi Civita connection of the corresponding Riemannian metric $g_\f$. 
In particular, we proved that the compact Lie group $\Aut(\M,\f)$ has abelian Lie algebra with dimension bounded above by $\mathrm{min}\{6,b_2(M)\}$. 
As a consequence, we showed that there are no compact homogeneous 7-manifolds endowed with an invariant closed non-parallel $\G_2$-structure, 
i.e., admitting a transitive Lie subgroup $\G\subseteq\Aut(\M,\f)$. 

In the non-compact setting, besides the examples on Lie groups mentioned above, 
it is possible to obtain homogeneous examples on the product of the circle (or the real line) with a non-compact homogeneous 6-manifold endowed 
with an invariant strictly symplectic half-flat SU(3)-structure $(\omega,\psi)$.  
In these examples, the closed $\G_2$-structure is given by $\f = \omega\W ds+ \psi$, where $s$ is the coordinate on the one-dimensional factor, and the transitive Lie group is {\em reductive}, 
i.e., its Lie algebra is the direct sum of a semisimple and an abelian ideal (note that any such Lie group cannot act simply transitively, see \cite{FiRa}). 
This naturally leads to the question whether these examples exhaust the class of such homogeneous manifolds when a reductive group of automorphisms acts transitively.  
Here, we answer this question positively, proving the following result.

\begin{theorem}\label{Main} 
Let $\M$ be a seven-dimensional manifold endowed with a closed non-parallel $\G_2$-structure $\f$, and assume that 
there exists a transitive Lie subgroup $\G\subseteq \Aut(\M,\f)$. If $\G$ is reductive and acts irreducibly on $\M$, then $\M$ is non-compact and 
\begin{enumerate}[\rm i)]
\item the group $\G$ has one-dimensional center and its semisimple part $\G_s$ is (locally) isomorphic to either $\SU(2,1)$ or $\SO(4,1)$;
\item the universal cover of $\M$ is isomorphic to the product $\mathcal O \times \mathbb R$, where $\mathcal O$ is a coadjoint orbit of $\G_s$ 
 	endowed with a $\G_s$-invariant strictly symplectic half-flat structure $(\omega,\psi)$, and the product $\mathcal O \times \mathbb R$ is endowed with the induced $\G_2$-structure $\f=\omega\W ds+\psi$.
\end{enumerate}
\end{theorem}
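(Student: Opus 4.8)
The plan is to realise $\M$ as a homogeneous space $\G/\Hg$, use the single torsion form of a closed $\G_2$-structure to constrain the centre of $\G$, and then reduce the problem to the geometry of coadjoint orbits of semisimple Lie groups. Since $\G$ preserves $\f$, it preserves $g_\f$, so the stabiliser $\Hg$ of a point is compact and $\M=\G/\Hg$; write $\frg=\frz\oplus\frg_s$ with $\frz$ the centre and $\frg_s$ semisimple, and note $\frz\cap\frh=0$ by almost-effectiveness. If $\M$ were compact, the main result of \cite{PR1} would force $\frg$ to be abelian, whence a transitive abelian action on $\M^7$ makes $(\M,g_\f)$ flat and $\f$ parallel, a contradiction; so $\M$ is non-compact.

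\emph{The centre.} For the closed non-parallel $\f$ the intrinsic torsion is a single $2$-form $\t\in\Omega^2_{14}(\M)$, $\t\neq0$, with $d{*}\f=\t\W\f$, $\t$ coclosed, and $\Scal(g_\f)=-\tfrac12|\t|^2$; by invariance $|\t|^2$ is a negative constant. For $\x\in\frz$ the induced Killing field is $\G$-invariant, $g(\x,\x)$ is constant, $\n_\x\x=0$, and the Koszul formula gives $d\x^\flat(Y,Z)=g(\x,[Y,Z])$, so $\x^\flat$ is closed exactly when $\x\perp\frg_s$ pointwise, in which case $\x$ is parallel; moreover $\iota_\x\f$ is a closed invariant $2$-form for every $\x\in\frz$. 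The technical heart of the argument is to combine these facts with $d{*}\f=\t\W\f$ and $\t\neq0$ to show first that $\frz\neq0$ — a transitive semisimple $\G$ being excluded by matching the $\G_2$-equations against the list of semisimple transitive actions on a $7$-manifold with compact isotropy — and then that $\dim\frz=1$ with its generator $\x$ orthogonal to $\frg_s$, hence parallel.

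\emph{Splitting and $\SU(3)$-reduction.} A parallel unit Killing field on the complete manifold $\M$ gives, on the universal cover, a Riemannian product $\widetilde{\M}=\Ng^6\times\bR$ with $\x=\del_s$; as $\x$ spans the centre, $\G_s$ acts transitively on $\Ng$. Writing $\f=\o\W ds+\j$ with $\o=\iota_\x\f|_{\Ng}$ and $\j=\f|_{\Ng}$, the $\G_2$-stabiliser of this pair is $\SU(3)$, so $(\o,\j)$ is a $\G_s$-invariant $\SU(3)$-structure on $\Ng$; $d\f=0$ is equivalent to $d\o=0$ and $d\j=0$, i.e.\ $(\o,\j)$ is symplectic half-flat, and $\f$ non-parallel is equivalent to this structure being strict, with torsion the pull-back of $\t$. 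Since $H^1(\frg_s)=H^2(\frg_s)=0$, the symplectic $\G_s$-action is Hamiltonian with equivariant moment map, so $\Ng$ covers a coadjoint orbit $\cO$ of $\G_s$ on which $\o$ is the Kirillov--Kostant--Souriau form up to scale; irreducibility prevents $\frg_s$ from decomposing into simple ideals, so $\frg_s$ is simple.

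\emph{Classification.} If $\G_s$ were compact then $\cO$ would be compact and $\cO\times\Su$ a compact homogeneous $7$-manifold carrying an invariant closed non-parallel $\G_2$-structure, contradicting \cite{PR1}; hence $\G_s$ is a non-compact simple group possessing a $6$-dimensional coadjoint orbit with compact isotropy. Running through the candidates, the Hermitian symmetric ones — e.g.\ $\gsp(4,\bR)/\U(2)$ and $\frsu(3,1)/\U(3)$ — admit no invariant $\SU(3)$-structure because the isotropy acts non-trivially on the determinant line, and $\frso(6,1)/\SO(6)$ admits no invariant symplectic form; one is left with $\frg_s=\frsu(2,1)$, $\cO=\G_s/T^2$, and $\frg_s=\frso(4,1)\cong\gsp(1,1)$, $\cO=\G_s/(\SO(2)\times\SO(3))$. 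Exhibiting the invariant strict symplectic half-flat structure on each of these two orbits then yields i) and ii). The decisive obstacle is the centre analysis of the second step: there one must play the $\G_2$ representation theory — the splitting $\Omega^2=\Omega^2_7\oplus\Omega^2_{14}$, the identity $\Scal(g_\f)=-\tfrac12|\t|^2$, the shape of $d{*}\f=\t\W\f$ — against the reductive decomposition $\frg=\frz\oplus\frg_s$ and the admissible compact isotropy subalgebras; once the product structure is obtained, the remaining classification is essentially bookkeeping.
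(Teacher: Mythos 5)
Your overall architecture (one-dimensional centre, product splitting along a parallel central Killing field, reduction to an invariant symplectic half-flat $\SU(3)$-structure on a six-dimensional $\G_s$-orbit, then classification) matches the shape of the paper's conclusion, and your final step essentially re-derives what the paper simply quotes from the classification of homogeneous strictly symplectic half-flat $6$-manifolds in \cite{PR2}. But the proposal has a genuine gap exactly where you yourself flag ``the technical heart'' and ``the decisive obstacle'': you never actually exclude the case $\gz=\{0\}$, nor do you prove $\dim\gz\leq 1$. These two facts are the bulk of the paper's proof, and the mechanism you propose for them --- playing the torsion identities $d{*}\f=\tau\W\f$ and $\Scal(g_\f)=-\tfrac12|\tau|^2$ against the reductive decomposition --- is not substantiated and is not what makes the argument work. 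The paper never uses the torsion $2$-form or the scalar curvature at all. Instead, for $\gg$ semisimple it runs a complete case-by-case analysis: Lemma \ref{sub} pins down the admissible isotropy subalgebras of $\gg_2$, a dimension count restricts $\gg$ to a short list of real forms (Tables \ref{Table1} and \ref{Table2}), and for each resulting pair $(\gg,\gh)$ one computes an explicit basis of $(\Lambda^3\gm)^\gh$, imposes $d\phi=0$ via the Koszul formula, and checks that the surviving forms fail to be definite (typically by evaluating $\iota_v\phi\W\iota_v\phi\W\phi$ on a well-chosen $v$). Without carrying out this elimination, ``a transitive semisimple $\G$ being excluded by matching the $\G_2$-equations against the list of semisimple transitive actions'' is a statement of the problem, not a proof. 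Similarly, the bound $\dim\gz=1$ comes in the paper from a separate lemma showing the $\G_s$-orbit has dimension six: the cases $\dim\mathcal O=2$ and $\dim\mathcal O=4$ are ruled out by the non-degeneracy of $\iota_{\hX}\f$ on $\{\hX_p\}^\perp$ and by a fixed-point argument for a $2$-torus in $\G_2$, respectively --- arguments absent from your sketch.

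Two smaller but real errors: the assertion that ``irreducibility prevents $\gg_s$ from decomposing into simple ideals, so $\gg_s$ is simple'' does not follow --- irreducibility only says no proper normal subgroup is transitive on $\M$, and the paper must separately dispose of semisimple non-simple candidates such as $\gsl(2,\bC)_{\bR}\oplus\gsl(3,\bR)$-type sums (Table \ref{Table2}); and your classification step mislabels the surviving isotropy groups (the paper, via \cite{PR2}, obtains $(\SO(4,1),\U(2))$ and $(\SU(2,1),\mathrm{T}^2)$, not $\SO(2)\times\SO(3)$). The moment-map identification of $\mathcal O$ as a coadjoint orbit via $H^1(\gg_s)=H^2(\gg_s)=0$ is a nice touch not explicit in the paper, but it cannot substitute for the missing eliminations.
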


We recall that a transitive action of a Lie group $\G$ is called {\it irreducible} when no proper normal Lie subgroup of $\G$ acts transitively (see e.g.~\cite[p.~75]{O} for terminology). 
Note that this assumption is not restrictive, as normal subgroups of reductive Lie groups are still reductive (see e.g.~\cite{B}).

We emphasize the following consequence of the above theorem. 
\begin{corollary}
Let $\M$ be a non-compact $7$-manifold endowed with a closed non-parallel $\G_2$-structure $\f$.  
If there exists a transitive Lie subgroup $\G\subseteq \Aut(\M,\f)$, then $\G$ cannot be semisimple. 
\end{corollary}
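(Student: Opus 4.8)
The plan is to obtain the Corollary directly from Theorem~\ref{Main} by first reducing to the irreducible case. Assume, for contradiction, that there exists a semisimple Lie subgroup $\G\subseteq\Aut(\M,\f)$ acting transitively on $\M$. Passing to the identity component (which still acts transitively, since $\M$ is connected) we may assume $\G$ is connected, so that $\gg\coloneqq\Lie(\G)$ is a direct sum of simple ideals, $\gg=\gg_1\oplus\cdots\oplus\gg_k$; in particular the center of $\gg$ is trivial.

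Next I would pass to a minimal transitive normal subgroup. Among the connected normal Lie subgroups of $\G$ that act transitively on $\M$ --- a family containing $\G$ itself --- choose one, say $\G_0$, with $\gg_0\coloneqq\Lie(\G_0)$ of minimal dimension. Since every ideal of the semisimple Lie algebra $\gg_0$ is a sum of some of the simple ideals $\gg_i$, it is also an ideal of $\gg$; hence any connected normal subgroup of $\G_0$ acting transitively on $\M$ would be a connected normal subgroup of $\G$ with the same property, and minimality forces it to coincide with $\G_0$. Therefore the action of $\G_0$ on $\M$ is irreducible in the sense recalled after Theorem~\ref{Main}. Moreover $\G_0$ is semisimple, a fortiori reductive, and $\G_0\subseteq\G\subseteq\Aut(\M,\f)$, so $\f$ is $\G_0$-invariant.

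Now Theorem~\ref{Main}, applied to the transitive, reductive, and irreducible action of $\G_0$ on $(\M,\f)$, yields that $\G_0$ has one-dimensional center. This contradicts the fact that a semisimple Lie group has discrete center, equivalently that $\gg_0$ has trivial center, and the Corollary follows. I do not expect any genuine obstacle here: the only step requiring a word of justification is that the passage to a minimal transitive normal subgroup produces an irreducible action, which is immediate from the structure of ideals in a semisimple Lie algebra; the rest is a direct invocation of Theorem~\ref{Main}.
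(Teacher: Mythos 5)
Your proof is correct and follows exactly the route the paper intends: reduce to a minimal (hence irreducible) transitive normal subgroup, which is still semisimple and in particular reductive, and then contradict part (i) of Theorem~\ref{Main} via the one-dimensional center. The paper leaves this argument implicit (it only remarks that passing to normal subgroups preserves reductivity), so your write-up simply makes the same reasoning explicit.
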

 
This work is structured as follows. In Section \ref{sectprelim}, we briefly review closed $\G_2$-structures and some related facts about their automorphisms. 
In Section \ref{proofmainthm}, we prove our main Theorem \ref{Main}. 
The proof involves several arguments from the theory of Lie algebras and their representations together with more geometric considerations.

\subsection*{Notation} 
Lie groups and their Lie algebras will be indicated by capital and gothic letters, respectively. 
If a Lie group $\G$ acts on a manifold $\M$, for every $X\in \gg$ we will denote by $\hX$ the vector field on $\M$ induced by the one-parameter subgroup $\exp(tX)$.

The abbreviation $e^{ijk\cdots}$ for the wedge product of covectors $e^i \W e^j \W e^k \W\cdots$ is used throughout the paper.

\section{Preliminaries}\label{sectprelim}

A $\G_2$-structure on a seven-dimensional manifold $\M$ is characterized by the existence of a 3-form $\f\in\Omega^3(\M)$ which is {\em definite}, 
namely at each point $p$ of $\M$ 
\[
0\neq \iota_v\f\W\iota_v\f\W\f\in \Lambda^7(T_p\M)^*,\quad \forall\ v\in T_p\M\smallsetminus\{0\}.
\] 
Such a 3-form $\f$ gives rise to an orientation on $\M$ and to a unique Riemannian metric $g_\f$ 
such that 
\[
g_\f(v,w)\mathrm{vol}_{g_\f} = \frac16\, \iota_v\f\W \iota_w\f\W\f, 
\]
for all $v,w\in T_p\M$.  
A $\G_2$-structure $\f$ is said to be {\em parallel} if $\nabla^{g_\f}\f=0$, where $\nabla^{g_\f}$ denotes the Levi Civita connection of $g_\f$. 
By \cite{FeGr}, this is equivalent to $\f$ being both {\em closed} ($d\f=0$) and {\em coclosed} ($d*_{\f}\f=0$). 
It is well-known that the Riemannian metric induced by a parallel $\G_2$-structure is Ricci-flat. 

We focus on the case when the $\G_2$-structure $\f$ is closed and non-parallel, namely $d\f=0$ and $d*_\f\f\neq0$,  
and we assume the existence of a connected Lie subgroup $\G\subseteq \Aut(\M,\f)$ acting transitively on $\M$. 
Then, $\M$ is necessarily non-compact by \cite[Cor.~2.2]{PR1}, and we can write $\M = \G/\H$, where the isotropy subgroup $\H \coloneqq \G_p$ at some fixed point $p\in\M$ is compactly embedded in $\G$.  
As $\M$ is not compact, we may suppose that it is simply connected and, therefore, that $\H$ is connected.  

From now on, we assume that the group $\G$ is non-compact and reductive. In particular, the Lie algebra $\gg$ of $\G$ can be written as $\gg = \gg_s\oplus\gz$, 
where $\gg_s$ is the Lie algebra of the maximal semisimple connected subgroup $\G_s$ of $\G$ and 
$\gz$ is the center of $\gg$.  
The given $\G_2$-structure $\f$ is not parallel, thus the metric $g_\f$ is not flat, and the Lie algebra $\gg$ is not abelian, i.e., $\gg_s$ is not trivial. 

Since ideals of reductive Lie algebras are also reductive (see e.g.~\cite{B}), we may also suppose that the $\G$-action is irreducible. 
The following lemma, which will be also useful in the sequel, gives further restrictions on the isotropy subgroup.

\begin{lemma}\label{proj} 
Suppose that $\gg = \gs_1\oplus\gs_2$ for some non-trivial ideals $\gs_1,\gs_2$ of $\gg$, and denote by $p_i:\gg\to\gs_i$, $i=1,2$, the corresponding projections. 
Then, 
\[
p_i(\gh)\neq \gs_i,\quad i=1,2.
\]
In particular, the isotropy subgroup $\H$ is contained in the semisimple subgroup $\G_s$.
\end{lemma}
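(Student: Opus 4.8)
The plan is to argue by contradiction using the transitivity of the $\G$-action together with the non-compactness of $\M$. Suppose, say, $p_1(\gh) = \gs_1$. Since the $\G$-action is transitive, $\gg = \gh + (\text{any complement})$; more precisely, transitivity at $p$ means $\dim\gg - \dim\gh = \dim\M = 7$, and the induced vector fields $\hX$ span $T_p\M$. I would first observe that $p_1(\gh)=\gs_1$ forces a large amount of $\gs_1$ to be "absorbed" into the isotropy. Concretely, consider the normal subgroup $\Sg_1\subseteq\G$ with Lie algebra $\gs_1$: its orbit through $p$ has tangent space the image of $\gs_1$ under $X\mapsto\hX_p$, which has dimension $\dim\gs_1 - \dim(\gs_1\cap\gh)$. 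The assumption $p_1(\gh)=\gs_1$ together with $\gh\subseteq\gg=\gs_1\oplus\gs_2$ means every element of $\gs_1$ differs from an element of $\gh$ by an element of $\gs_2$, so modulo $\gh$ the images of $\gs_1$ and of $\gs_2$ coincide; hence the $\Sg_2$-orbit through $p$ already has the full tangent space, i.e.\ $\Sg_2$ acts transitively on $\M$. This contradicts irreducibility of the $\G$-action, since $\Sg_2$ is a proper normal subgroup. The same argument with indices swapped handles $p_2(\gh)=\gs_2$.

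For the final assertion, apply the first part to the splitting $\gg=\gg_s\oplus\gz$ with $\gs_1=\gg_s$, $\gs_2=\gz$ (this is a genuine splitting into non-trivial ideals because $\gg_s$ is non-trivial by the non-parallel hypothesis and $\gz$ is non-trivial — I should check this: if $\gz=0$ then $\gg$ is semisimple, which is not excluded a priori at this stage, so I will instead argue directly). The clean route: $\H$ is compact, hence $\gh$ is the Lie algebra of a compact subgroup, so $\gh$ is a compactly embedded subalgebra; its projection $p_2(\gh)$ to the center $\gz$ is an abelian subalgebra of $\gz$, and since $\G$ acts effectively (or almost effectively) we may take it trivial — but to be careful I would rather use: the connected subgroup of $\G$ with Lie algebra $\gz$ is central, hence closed or its closure is a torus, and a compact isotropy cannot project onto a noncompact central direction. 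More simply: since $p_2(\gh)\neq\gz$ is already established when $\gz\neq 0$ by the first part, and $p_2(\gh)$ is an ideal-complemented piece, one deduces $\gh\subseteq\gg_s\oplus\gm$ for a proper subspace $\gm\subsetneq\gz$; iterating, $\gh\cap\gz$ is forced to be a subalgebra on which... — here the cleanest statement is simply that $p_2(\gh)\subsetneq\gz$ combined with $\gh$ being an ideal summand's graph forces $\gh\subseteq\gg_s$, because $\gz$ is central and abelian so the only way a subalgebra of $\gg$ can have nontrivial but proper projection to $\gz$ while $\G$ acts irreducibly is excluded by transitivity of $\Sg_s\cdot(\text{1-dim torus})$.

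The main obstacle I anticipate is the bookkeeping in the last step: passing from "$p_i(\gh)\neq\gs_i$" to "$\gh\subseteq\G_s$" is not purely formal, since a priori $\gh$ could have a small diagonal component mixing $\gg_s$ and $\gz$. The key extra input is that $\gz$ is central, so any $X=X_s+X_z\in\gh$ with $X_z\neq 0$ generates, via brackets with $\gh$, elements of $\gh$ lying in $\gg_s$; combined with $p_1(\gh)\neq\gg_s$ and the fact that $\gh$ is a subalgebra of bounded dimension (the isotropy of a $7$-manifold under a group with $1$-dimensional — to be shown — center), one forces $X_z=0$ for all $X\in\gh$. I would write this out carefully: if $\gh\not\subseteq\gg_s$, pick $0\neq X_z\in p_2(\gh)$; the preimage $\gh\cap(\gg_s\oplus\R X_z)$ together with the transitive action of $\Sg_s$ times the $1$-parameter group $\exp(tX_z)$ gives a proper normal subgroup acting transitively, contradicting irreducibility. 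That reduction to irreducibility is exactly what Lemma \ref{proj} is designed to feed into later, so it is natural here too.
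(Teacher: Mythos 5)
Your first assertion is proved correctly and by essentially the same route as the paper: from $p_1(\gh)=\gs_1$ you deduce $\gg=\gh+\gs_2$, so the $\Sg_2$-orbit through $p$ is open (the paper phrases this as a dimension count); one then invokes the fact that an open orbit of a \emph{normal} subgroup of a transitive group on a connected manifold is all of $\M$ --- a point you elide with ``i.e.'', but which does require normality, since an open orbit of an arbitrary subgroup need not be everything.

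The second assertion is where your proof has a genuine gap. Writing $\mathrm{pr}:\gg\to\gz$ for the projection along $\gg_s$ and $\ga:=\mathrm{pr}(\gh)$, the first part applied to the splitting $\gg=\gg_s\oplus\gz$ only gives $\ga\neq\gz$ (when $\gz\neq0$), and you correctly sense that this is not enough. But your proposed fix --- that $\Sg_s\cdot\exp(\R X_z)$ with $0\neq X_z\in\ga$ acts transitively --- is false: since $\gh\subseteq\gg_s\oplus\ga$, one has $\gg_s+\R X_z+\gh\subseteq\gg_s\oplus\ga\subsetneq\gg$ whenever $\ga\neq\gz$, so the orbit of that subgroup is not even open. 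The subgroup that \emph{does} act transitively is the one with Lie algebra $\gg_s\oplus\gb$, where $\gb$ is a complement of $\ga$ in $\gz$. The paper's one-line trick is to observe that $\ga$, being central, is itself an ideal, so the first part of the lemma applies verbatim to the new splitting $\gg=\ga\oplus(\gg_s\oplus\gb)$: the projection of $\gh$ onto the first summand is all of $\ga$ by construction, hence if $\ga\neq\{0\}$ the proper normal subgroup with algebra $\gg_s\oplus\gb$ is transitive, contradicting irreducibility. (Your other attempted route via compactness of $\H$ indeed fails, as you suspected: a compact isotropy can perfectly well project onto a central torus, so irreducibility, not compactness, is the operative hypothesis.)
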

\begin{proof} 
Assume that $p_1(\gh)=\gs_1$, and let $\Sg_2$ be the connected Lie subgroup of $\G$ with Lie algebra $\gs_2$. Then, 
\[
\dim (\Sg_2\cdot p) 	= \dim \gs_2  -\dim (\gh\cap\gs_2) = \dim \gs_2 - (\dim \gh  -\dim \gs_1) = \dim \gg -\dim \gh  = \dim(\M)
\]
implies that $\Sg_2$ has an open orbit in $\M$, hence it is transitive on $\M$ (see e.g.~\cite[p.~178]{KN}), contradicting the irreducibility of the $\G$-action. 
To prove that $\H\subseteq \G_s$, let us consider the projection $pr:\gg\to\gz$ along $\gg_s$,  and let us suppose that $\ga \coloneqq pr(\gh)\neq\{0\}$. 
We then get a contradiction by putting $\gs_1\coloneqq \ga$ and $\gs_2\coloneqq \gg_s\oplus \gb$, 
where $\gb\subseteq \gz$ is a subspace with $\gz = \ga\oplus\gb$. 

\end{proof}

The isotropy subgroup $\H$ is compactly embedded in $\G$ but it is not necessarily compact. We can then consider the closure $\overline\H$ of $\H$ in the isometry group $\mathrm{Iso}(\M,g_\f)$ 
or, equivalently, in the closed subgroup $\Aut(\M,\f)\subseteq\mathrm{Iso}(\M,g_\f)$. Notice that $\overline \H$ is compact, as it is a closed subgroup of the compact group $\mathrm{Iso}(\M,g_\f)_p$,  
and that $\H$ is a normal subgroup of $\overline \H$. Since $\overline \H\subset\Aut(\M,\f)$, it embeds into $\G_2$ via the isotropy representation at $p$. 
We discuss some useful properties of $\H$ and $\overline{\H}$ in the next lemma. 
\begin{lemma}\label{sub}\ 
\begin{enumerate}[i)]
\item\label{Lemi} The Lie algebra $\gh$ is a proper subalgebra of $\gg_2$;
\item\label{Lemii} if the Lie subalgebra $\overline\gh\subset\gg_2$ is proper and non-abelian, then its dimension belongs to $\{3,4,6,8\}$ and it is isomorphic to $\so(3)$, $\gu(2)$, $\so(4)$ and $\su(3)$, respectively. 
In all these cases, $\H$ is compact;
\item\label{Lemiii} if $\H$ is not compact, then $\dim \H =1$.
\end{enumerate}
\end{lemma}
\begin{proof} \ 
\begin{enumerate}[i)] 
\item 
If $\H\cong\G_2$, then $(\M,g_\f)$ is a two-point homogeneous space, as $\G_2$ acts transitively on the unit 6-sphere (see e.g.~\cite{Hel}). 
By \cite[Thm.~3]{Hel1}, $(M,g_\f)$ is then isometric to the Euclidean space $\mathbb R^7$. This implies that $\f$ is parallel. 
\item It is well-known (see e.g.~\cite{OV}) that maximal subalgebras of maximal rank in $\gg_2$ are given by $\su(3)$ or $\so(4)$, while $\so(3)$ appears as the only maximal subalgebra of rank one. 
Moreover, a maximal subalgebra of $\su(3)$ or $\so(4)$ is isomorphic to $\gu(2)$ or $\so(3)$. As $\gh$ is an ideal of $\overline\gh$, our claim follows. 
\item If $\H$ is not compact, then $\overline\gh$ must be either $\gg_2$ or abelian. If $\overline \gh = \gg_2$, it is simple and therefore $\gh$ is trivial by point \ref{Lemi}). 
Consequently, $\overline\gh$ is abelian and $\dim\overline\gh \leq 2$. If $\dim\gh =2$, then $\gh=\overline\gh$ and $\H$ is compact. Thus, the only possibility is $\dim \H = 1$. 
\end{enumerate}
\end{proof}

\section{Proof of the Main Theorem}\label{proofmainthm}
In this section, we prove the Theorem \ref{Main}.  
The proof is divided into three parts, according to $\gg$ being simple, semisimple not simple, and not semisimple.


\subsection{Case $\gg$ simple} \par {}
Using Lemma \ref{sub}, we see that  $\dim(\gh)\in\{0,1,2,3,4,6,8\}$. Therefore, we need to single out those real simple algebras whose dimensions belong to the set 
$\{7,8,9,10,11,13,15\}$.  
We recall that a real simple Lie algebra is either a real form or the realification of a complex simple Lie algebra. 
A direct inspection of the list of complex simple Lie algebras shows that $\dim(\gg) \in \{8,10,15\}$ and that $\gg$ is the real form of one of the following complex simple Lie algebras 
\[
\gsl(3,\mathbb C),~\so(5,\mathbb C),~\gsl(4,\mathbb C).
\]

When $\dim(\gg) = 15$, i.e., $\gg^c = \gsl(4,\mathbb C)$, we see that $\dim(\gh) = 8$, hence $\gh \cong \su(3)$. 
This forces $\gg = \su(3,1)$ with reductive decomposition $\gg = \gh \oplus \gn_o\oplus\gn$, 
where $\gn_o\cong \mathbb R$ and $\gn\cong\mathbb C^3$ is the standard $\SU(3)$-module. 
The $\gh$-invariant $3$-forms on the tangent space $T_p\M\cong\gn_o\oplus\gn$ lie in the modules $\gn_o^*\otimes(\Lambda^2 \gn^*)^\gh$ and $(\Lambda^3\gn^*)^\gh$. 
We may select a basis $\{e_1,\ldots,e_6\}$ of $\gn$ and a basis vector $e_7$ of $\gn_o$ with $e_{2i}=[e_7,e_{2i-1}]$, $i=1,2,3$. 
Let $\{e^1,\ldots,e^6,e^7\}$ be the corresponding dual basis of $(\gn_o\oplus\gn)^*$.
Then, the space $(\Lambda^3\gn^*)^\gh$ is generated by the forms 
\[
\gamma_1\coloneqq e^{135} - e^{146} - e^{236} - e^{245},\quad \gamma_2\coloneqq e^{136} + e^{145} + e^{235} - e^{246}, 
\]
and the module $\gn_o^*\otimes(\Lambda^2 \gn^*)^\gh$ is spanned by $e^{127}+e^{347}+e^{567}$. 
Noting that $[\gn,\gn]\subseteq \gh\oplus \gn_o$, we see that for any closed invariant $3$-form $\phi$ we have 
\[
0= d\phi(e_7,e_1,e_3,e_5)= -\phi(e_2,e_3,e_5) - \phi(e_1,e_4,e_5)-\phi(e_1,e_3,e_6),
\]
so that $\phi$ has no component along $\gamma_2$. 
A similar argument shows that $\phi$ has no component along $\gamma_1$.  
This implies that $\phi$ cannot be definite, and the case $\gg^c = \gsl(4,\mathbb C)$ can be ruled out. \par 

We are then left with the cases $\gg^c = \gsl(3,\mathbb C),\ \so(5,\mathbb C)$. The possible pairs $(\gg,\gh)$ corresponding to these Lie algebras are given in Table \ref{Table1}, 
where it is also specified how $\gh$ sits inside the maximal compactly embedded subalgebra of $\gg$ up to conjugation. 
We discuss each possibility separately. As we will see, in all cases there are no invariant 3-forms that are both closed and definite.  

\par \medskip

\begin{table}[ht]
\centering
\renewcommand\arraystretch{1.4}
\begin{tabular}{|c|c|c|c|}
\hline
n.&$\gh$													& 	$\gg$& note	\\ \hline \hline
1& $\mathbb R$	&	$\gsl(3,\mathbb R)$	&	$\gh\subset\so(3)$	\\ \hline 
2& $\mathbb R$	&	$\su(2,1)$	&	$\gh\subset\gu(2)\subset \su(2,1)$	\\ \hline    
3& $\so(3)$ &	$\so(3,2)$	& $\gh\subset\so(3)\oplus\so(2)\subset \so(3,2)$	\\ \hline    
4& $\so(3)$ &	$\so(4,1)$	& $\gh=\so(3)\oplus\{0\}\subset \so(4)\subset\so(4,1)$		\\ \hline  
5& $\so(3)$ &	$\so(4,1)$ 	& $\gh = \so(3)_{\rm{diag}}\subset \so(3)\oplus\so(3)=\so(4)\subset \so(4,1)$		\\ \hline  

\end{tabular}
\vspace{0.2cm}
\caption{Possible pairs $(\gg,\gh)$ when $\gg^c = \gsl(3,\mathbb C)$ or  $\gg^c=\so(5,\mathbb C)$.}\label{Table1}
\end{table}

\medskip

\subsection*{Case n.1}  We have a reductive decomposition $\gsl(3,\mathbb R) = \gh \oplus \gm$, 
where $\gh \cong \so(2)$. 
Note that $\gh\subset \so(3)$, so that $\H$ is compact and $\gh$ can be assumed to be generated by any element in $\so(3)$ up to conjugation. 
The tangent space $\gm$ splits into the sum of four $\ad(\gh)$-invariant submodules $\gm \cong \bigoplus_{i=0}^3\gm_i$, with $\dim(\gm_0)=1$ and $\dim(\gm_i)=2$, for $i=1,2,3$.
We can fix the following basis of the modules:  
$\gm_0 = {\mbox{Span}}\{e_1\}$, $\gm_i = {\mbox{Span}}\{e_{2i},e_{2i+1}\}$, $i=1,2,3$, and $\gh =  {\mbox{Span}}\{e_8\}$, where 
\begin{eqnarray*}
e_1 = \begin{psmallmatrix}  2&0&0 \\ 0&-1&0\\ 0&0&-1   \end{psmallmatrix},\  e_2 = \begin{psmallmatrix}  0&1&0 \\ -1&0&0\\ 0&0&0  \end{psmallmatrix},\ 
e_3 = \begin{psmallmatrix}  0&0&1 \\ 0&0&0\\ -1&0&0   \end{psmallmatrix},\  e_4 = \begin{psmallmatrix}  0&1&0 \\ 1&0&0\\ 0&0&0\end{psmallmatrix}, & \\ 
e_5 = \begin{psmallmatrix} 0&0&1 \\ 0&0&0\\ 1&0&0  \end{psmallmatrix},\  e_6 = \begin{psmallmatrix}  0&0&0 \\ 0&-1&0\\ 0&0&1   \end{psmallmatrix},\ 
e_7 = \begin{psmallmatrix} 0&0&0 \\ 0&0&1\\ 0&1&0   \end{psmallmatrix},\ 
e_8 = \begin{psmallmatrix} 0&0&0 \\ 0&0&1\\ 0&-1&0 \end{psmallmatrix}. & 
\end{eqnarray*}

The space $(\Lambda^3\gm)^\gh$ has dimension seven and it is generated by the forms 
\begin{eqnarray*}
&\gamma_1 = e^{123},\quad \gamma_2 = e^{145},\quad \gamma_3=e^{167}, & \\
&\gamma_4 = e^{124}+e^{135},\quad \gamma_5 = e^{125} - e^{134}, & \\
&\gamma_6 = e^{246}-e^{257}- e^{347} -e^{356},\quad 
\gamma_7 = e^{247}+e^{256}+e^{346} - e^{357}.& 
\end{eqnarray*}
The generic $\ad(\gh)$-invariant 3-form is then given by $\phi = \sum_{i=1}^7 a_i\,\gamma_i,$ with $a_i\in\R$.  
Using the expression of the Chevalley-Eilenberg differential, we see that 
\[
d\phi(e_3,e_5,e_6,e_7) = -a_3. 
\]
On the other hand, we have
\[
\iota_{e_7}\phi \W \iota_{e_7}\phi \W \phi = 6\left(a_6^2+a_7^2\right) a_3\,e^{1234567}. 
\]
This shows that any closed invariant 3-form $\phi\in(\Lambda^3\gm)^\gh$ cannot be definite.  \par

\subsection*{Case n.2} 
The isotropy subalgebra $\gh\cong\R \subset \gu(2) \subset \su(2,1)=\gg$ can be assumed to be spanned by $e_8 \coloneqq {\mbox{diag}}(i\alpha,i\beta,-i(\alpha+\beta))$, with $\alpha,\beta\in\mathbb{R}$ and $\a^2+\b^2\neq0$. 
Note that $\H$ might be non-compact (cf.~point \ref{Lemiii}) of Lemma \ref{sub}). 
We consider a reductive decomposition $\gg = \gh \oplus \gm$, and we observe that an $\ad(\gh)$-irreducible decomposition of the tangent space $T_p\M\cong\gm$ is given by 
\[
\gm = \gm_0 \oplus \gm_1\oplus \gm_2\oplus\gm_3, 
\]
with $\dim \gm_0=1$ and $\dim \gm_i=2$, $i=1,2,3$. We choose the following basis for the submodules 
\[
\begin{split}
\gm_0:~e_1 = \begin{psmallmatrix} i(-2\,\b-\a)&0&0 \\ 0&i\,(2\,\a+\b)&0\\ 0&0&i\,(\b-\a) \end{psmallmatrix},	&\quad	
\gm_1:~ 	\left\{e_2 = \begin{psmallmatrix} 0&1&0 \\ -1&0&0\\ 0&0&0  \end{psmallmatrix},~ 
		e_3 = \begin{psmallmatrix} 0&i&0 \\ i&0&0\\ 0&0&0  \end{psmallmatrix}\right\},\\
\gm_2:~	\left\{e_4 = \begin{psmallmatrix} 0&0&1 \\ 0&0&0\\ 1&0&0  \end{psmallmatrix},~
		e_5 = \begin{psmallmatrix} 0&0&i \\ 0&0&0\\ -i&0&0  \end{psmallmatrix}\right\},				&\quad
\gm_3:~	\left\{e_6 = \begin{psmallmatrix} 0&0&0 \\ 0&0&1\\ 0&1&0  \end{psmallmatrix},~
		e_7 = \begin{psmallmatrix} 0&0&0 \\ 0&0&i\\ 0&-i&0  \end{psmallmatrix}\right\}, 
\end{split}
\]
and we easily obtain 
\[
\ad(e_8) e_2 = (\a-\b)\,e_3,\quad  \ad(e_8)e_4=(2\a+\b)\,e_5,\quad \ad(e_8)e_6 = (\a+2\b)\,e_7.
\]
The $\ad(\gh)$-module $\gm_0$ is trivial, and the following possibilities occur for $\gm_i$, $i=1,2,3$: 
\begin{enumerate}[a)]
\item\label{aCase} the modules $\gm_i$ are non-trivial and precisely two of them are equivalent;
\item\label{bCase} two of the modules $\gm_i$ are non-trivial and equivalent, while the remaining one is trivial; 
\item\label{cCase} the modules $\gm_i$ are non-trivial and mutually inequivalent. 
\end{enumerate}

Now, in each case \ref{aCase}) - \ref{cCase}) we can determine the expression of the generic closed invariant 3-form on $\gm$ as follows.  
First, we consider the generic invariant 3-form $\phi\in(\Lambda^3\gm^*)^\gh$ and we compute its exterior derivative $d\phi$ using the  expression of the Chevalley-Eilenberg differential  
(and a software for symbolic computations,  e.g.~Maple, if needed). Then, we solve the system of linear equations in the coefficients of $\phi$ arising from the condition $d\phi=0$. 
In detail, we obtain  
\begin{enumerate}[a)]
\item $\dim\left((\Lambda^3\gm)^\gh\right)=7$. If $\gm_1\cong\gm_2$, then the generic closed invariant 3-form is 
\[
\phi	= a_1\left(e^{124}-e^{135} \right) + a_2\left(e^{125}+e^{134}\right) + a_3\left(e^{247}-e^{256}+e^{346}+e^{357} \right), 
\]
and it is not definite. The other possible cases $\gm_1\cong\gm_3$ and $\gm_2\cong\gm_3$ are dealt with similarly. \vspace{0.1cm}
\item $\dim\left((\Lambda^3\gm)^\gh\right)=13$. If $\gm_1$ is trivial, the generic closed invariant 3-form is 
\begin{eqnarray*}
\phi 	&=& a_1 \left(-3(e^{146} + e^{157}) + e^{245} - e^{267} \right) +a_{2} \left(	-3 \left(e^{147}-e^{156} \right) - e^{345}+e^{367} \right)	\\	
	& & +a_3\left(e^{247}-e^{256} + e^{346}+e^{357}\right),
\end{eqnarray*}
and it is not definite. The cases $\gm_2$ or $\gm_3$ trivial are analogous. \vspace{0.1cm}
\item $\dim\left((\Lambda^3\gm)^\gh\right)=5$ and the generic closed invariant 3-form is 
\[
\phi = a_1\left(-e^{247}+e^{256}-e^{346}-e^{357} \right), 
\]
and it is not definite. 
\end{enumerate} 
\par

\subsection*{Case n.3}
We have the Cartan decomposition $\gg = \gk +\gp$, where $\gk = \so(3)+\so(2)$.  
Then, $\gh=\so(3)$ is the semisimple part of $\gk$ and $\gv\coloneqq \so(2)$ is the center of $\gk$. 
The $\ad(\gh)$-module $\gp$ splits as the sum $\gp = \gn_1\oplus\gn_2$, where $\gn_1\cong\gn_2\cong\mathbb R^3$ are equivalent modules. 
We may select a basis $\{e_1,e_2,e_3\}$ of $\gn_1$ and a basis vector $e_7$ of $\gv$ in such a way that $\{e_{i+3}\coloneqq[e_i,e_7]\}_{i=1,2,3}$ is a basis of $\gn_2$. 
The space of $\ad(\gh)$-invariant 3-forms on $T_p\M\cong \gv\oplus\gp$ decomposes into the sum of five one-dimensional submodules as follows
\[
\Lambda^3(\gv\oplus\gp)^\gh = \Lambda^3\gn_1 \oplus (\Lambda^2\gn_1\otimes\gn_2)^\gh \oplus (\gn_1\otimes\Lambda^2\gn_2)^\gh \oplus \left(\gv\otimes(\gn_1\otimes\gn_2)^\gh\right) \oplus \Lambda^3\gn_2. 
\]
From this, we immediately see that a basis of invariant 3-forms is given by
\[
\gamma_1\coloneqq e^{123},~\gamma_2\coloneqq e^{126}-e^{135}+e^{234},~\gamma_3\coloneqq e^{156}-e^{246}+e^{345},~\gamma_4 \coloneqq e^{147}+e^{257}+e^{367},~\gamma_5\coloneqq e^{456}. 
\]
The generic invariant 3-form $\phi=\sum_{i=1}^5a_i\gamma_i$ satisfies 
\[
\iota_{e_7}\phi \W \iota_{e_7}\phi \W \phi = -6\,a_4^3\,e^{1234567},
\]
and we have 
\[
d\phi(e_1,e_2,e_4,e_5) = 2\,a_4.
\]
Consequently, any closed invariant 3-form is not definite. 
\par

\subsection*{Case n.4}
We consider the Lie algebra 
\[
\so(4,1) = \left\{\left(\begin{smallmatrix} 0&v\\{}^tv&A\end{smallmatrix}\right)|\ A\in \so(4),\ v\in \mathbb R^4\right\}, 
\]
and the ideals of $\so(4)\subset\so(4,1)$ given by
\[
\gh 	=	\left \{ \left(\begin{matrix} 0&a&b&c\\-a&0&c&-b\\
                 -b&-c&0&a\\ -c&b&-a&0\end{matrix}\right),\ a,b,c\in\mathbb R\right \},\quad
\gp 	=	\left \{ \left(\begin{matrix} 0&r&s&t\\-r&0&-t&s\\
                 -s&t&0&-r\\ -t&-s&r&0\end{matrix}\right),\ r,s,t\in\mathbb R\right \},
\]
so that $\so(4) = \gh\oplus\gp$, $\gh\cong\so(3)$.  
We have the $\ad(\gh)$-invariant decomposition $\gg = \gh \oplus \gp\oplus \gn$, where $\gn \cong \mathbb R^4$ via the map 
$\gg \ni \left(\begin{smallmatrix} 0&v\\{}^tv&0\end{smallmatrix}\right) \mapsto v \in  \mathbb R^4$. 
Moreover, we have 
\[
[\gh,\gp]= 0,\quad [\gn,\gn]= \gh+\gp,\quad [\gp,\gn]\subseteq \gn.
\]
We select the following basis of $\gp$
\[
e_1 \coloneqq 	\left(\begin{matrix} 0&1&0&0\\-1&0&0&0\\
                 	0&0&0&-1\\ 0&0&1&0\end{matrix}\right),\ 
e_2	\coloneqq \left(\begin{matrix} 0&0&1&0\\ 0&0&0&1\\
                 	-1&0&0&0\\ 0&-1&0&0\end{matrix}\right),\ 
e_3	\coloneqq \left(\begin{matrix} 0&0&0&1\\0&0&-1&0\\
           	      	0&1&0&0\\ -1&0&0&0\end{matrix}\right),
\]
with the standard relations $[e_i,e_j]=2\epsilon_{ijk}e_k$, for $i,j,k\in\{1,2,3\}$. 
Moreover, we consider the canonical basis of $\mathbb R^4\cong\gn$ and we denote it by $\{e_4,e_5,e_6,e_7\}.$

The space of $\ad(\gh)$-invariant $3$-forms $\Lambda^3(\gp\oplus\gn)^\gh$ can be decomposed as 
\[
\Lambda^3(\gp\oplus\gn)^\gh = \Lambda^3\gp \oplus \left(\gp\otimes(\Lambda^2\gn)^\gh\right).
\]
A basis of $(\Lambda^2\gn)^\gh$ is given by 
\[
\o_1 \coloneqq e^{45}-e^{67},\quad \o_2\coloneqq e^{46}+e^{57},\quad \o_3\coloneqq e^{47}-e^{56},
\]
so that a basis of invariant 3-forms is  $\{e^{123},~e^i\wedge\o_j\}_{i,j=1,2,3}.$
We consider the generic invariant 3-form 
\begin{eqnarray*}
\phi	&=&	a_1\,e^{123} + a_2\, e^1\W\o_1 + a_3\,e^1\W\o_2+a_4\,e^1\W\o_3 + a_5\, e^2\W\o_1 + a_6\,e^2\W\o_2+a_7\,e^2\W\o_3\\
	& & + a_8\, e^3\W\o_1 + a_9\,e^3\W\o_2+a_{10}\,e^3\W\o_3,
\end{eqnarray*}
and we notice that 
\[
\iota_{e_1}\phi \W \iota_{e_1}\phi \W \phi = -6\,a_1\left(a_2^2+a_3^2+a_4^2\right) e^{1234567}. 
\]
Now, we have 
\begin{eqnarray*}
d\phi(e_2,e_3,e_6,e_7) = \frac12 a_1 +2a_2-2a_6-2a_{10},	& & d\phi(e_1,e_3,e_5,e_7) = \frac12 a_1 -2a_2+2a_6-2a_{10},\\
d\phi(e_1,e_2,e_5,e_6) = \frac12 a_1 -2a_2-2a_6+2a_{10}, & & d\phi(e_4,e_5,e_6,e_7) = a_2+a_6+a_{10}. 
\end{eqnarray*}
Thus, if $\phi$ is closed, then $a_1=a_2=a_6=a_{10}=0$ and $\phi$ is not definite. 

\subsection*{Case n.5} 
We have the Cartan decomposition $\gg = \gk +\gp$, where $\gk = \so(4)$ and $\gh\cong\so(3)\subset \gk$. 
We can consider the $\ad(\gh)$-invariant decomposition $\gk = \gh \oplus\gn_1$, with $[\gn_1,\gn_1]\subseteq\gh$. 
The $\ad(\gh)$-module $\gp$ splits as $\gp = \gn_2\oplus\gv$, 
where $\gv = \mathbb R V$ for some $V\in \gp$ satisfying the following properties
\[
[V,\gn_1]=\gn_2,\ [V,\gn_2] = \gn_1,\ [\gn_2,\gn_2]\subseteq \gh.
\]
We choose the following basis for $\frh$
\[
e_8 = \begin{psmallmatrix} 0&0&0&0&0 \\ 0&0&0&0&0 \\ 0&0&0&1&0 \\ 0&0&-1&0&0 \\ 0&0&0&0&0  \end{psmallmatrix}, \quad
e_9 = \begin{psmallmatrix} 0&0&0&0&0 \\  0&0&0&0&0 \\ 0&0&0&0&1 \\ 0&0&0&0&0 \\ 0&0&-1&0&0  \end{psmallmatrix}, \quad
e_{10} =  \begin{psmallmatrix} 0&0&0&0&0 \\  0&0&0&0&0 \\ 0&0&0&0&0 \\ 0&0&0&0&1 \\ 0&0&0&-1&0  \end{psmallmatrix},
\]
and the following basis for the irreducible summands of the tangent space $\gm \cong \gn_1\oplus\gn_2\oplus\gv$
\begin{eqnarray*}
\gn_1:&~& 
e_1 = \begin{psmallmatrix} 0&0&0&0&0 \\ 0&0&0&0&1 \\ 0&0&0&0&0 \\ 0&0&0&0&0 \\ 0&-1&0&0&0  \end{psmallmatrix}, \quad
e_2 = \begin{psmallmatrix} 0&0&0&0&0 \\ 0&0&0&1&0 \\ 0&0&0&0&0 \\ 0&-1&0&0&0 \\ 0&0&0&0&0  \end{psmallmatrix}, \quad
e_3 = \begin{psmallmatrix} 0&0&0&0&0 \\ 0&0&1&0&0 \\ 0&-1&0&0&0 \\ 0&0&0&0&0 \\ 0&0&0&0&0  \end{psmallmatrix}, \\
\gn_2:&~&
e_4 = \begin{psmallmatrix} 0&0&0&0&1 \\ 0&0&0&0&0 \\ 0&0&0&0&0 \\ 0&0&0&0&0 \\ 1&0&0&0&0  \end{psmallmatrix}, \quad
e_5 = \begin{psmallmatrix} 0&0&0&1&0 \\ 0&0&0&0&0 \\ 0&0&0&0&0 \\ 1&0&0&0&0 \\ 0&0&0&0&0  \end{psmallmatrix}, \quad
e_6 = \begin{psmallmatrix} 0&0&1&0&0 \\ 0&0&0&0&0 \\ 1&0&0&0&0 \\ 0&0&0&0&0 \\ 0&0&0&0&0  \end{psmallmatrix}, \\
\gv:&~& 
e_7 = \begin{psmallmatrix} 0&1&0&0&0 \\ 1&0&0&0&0 \\ 0&0&0&0&0 \\ 0&0&0&0&0 \\ 0&0&0&0&0  \end{psmallmatrix}. 
\end{eqnarray*} 
We then see that $(\Lambda^3\gm)^\gh$ splits into the sum of the following one-dimensional submodules
\[
(\Lambda^3\gm)^\gh \cong 	\Lambda^3\gn_1 \oplus \Lambda^3\gn_2 \oplus (\Lambda^2\gn_1 \otimes \gn_2)^\gh \oplus (\gn_1 \otimes \Lambda^2\gn_2)^\gh 
						\oplus (\gn_1\otimes\gn_2\otimes\gv)^\gh, 
\]
and a basis of invariant 3-forms is given by 
\[
\gamma_1 = e^{123},~\gamma_2 = e^{456},~\gamma_3 = e^{126}-e^{135}+e^{234},~\gamma_4 = e^{156}-e^{246}+e^{345},~\gamma_5 = e^{147}+e^{257}+e^{367}. 
\]
The generic invariant 3-form $\phi = \sum_{i=1}^5 a_i\gamma_i$ satisfies 
\[
\iota_{e_7}\phi \W \iota_{e_7}\phi \W \phi = -6\,(a_5)^3\,e^{1234567}. 
\]
If $\phi$ is closed, then 
\[
0 = d\phi(e_1,e_2,e_4,e_5) = d\phi(e_1,e_3,e_4,e_6) = d\phi(e_2,e_3,e_5,e_6) = 2\,a_5. 
\]
Thus, no closed invariant 3-form on $\gm$ can be definite.


\subsection{Case $\gg$ semisimple not simple}
We begin with the following. 
\begin{lemma} Let $\gg$ be semisimple, not simple. Then, any simple factor of $\gg$ appears in the following list: 
\begin{enumerate}[-]
\item real forms of $\gsl(2,\mathbb C),~\gsl(3,\mathbb C),~\so(5,\mathbb C)$;
\item the realification of $\gsl(2,\mathbb C)$.
\end{enumerate}
Moreover, the dimension of the isotropy subalgebra $\gh$ belongs to $\{2,4,6\}$.
\end{lemma}
\begin{proof} 
By Lemma \ref{sub} we have that $\dim(\gh)\leq 8$, whence $\dim(\gg)\leq 15$. 
The possible simple factors of $\gg$ can be deduced from a direct inspection of the list of complex simple Lie algebras with dimension at most 15 (see e.g.~\cite[Ch.~X, Table IV]{Hel}), 
together with the fact that $\gg$ is not simple. 
From this, we see that $\dim(\gh) \geq 2$, as there are no semisimple real algebras of dimension $7$ and a semisimple real Lie algebra of dimension $8$ is simple. 
Noting that $\dim(\gh) = 3$ cannot occur as $\dim(\gg) = 10$ implies that $\gg$ is simple, we have that  either $\dim(\gh) = 2$ or $\dim(\gh) \geq 4$. 
The dimensions $\dim(\gh) = 5,7$ are ruled out using Lemma \ref{sub}, as $\gh$ is an ideal of $\overline\gh$. 
If $\dim(\gh) = 8$, again by Lemma \ref{sub} we have that $\gh \cong\su(3)$ and $\dim(\gg) = 15$. 
We write $\gg = \bigoplus_{i=1}^k\gs_i$ as a sum of its simple factors, with $\dim(\gs_i)\in \{3,6,8,10\}$, $i=1,\ldots,k$, $k\geq 2$. As $\gh$ is simple, 
at least one simple factor of $\gg$ has dimension $8$ or $10$ so that $\sum_{i=1}^k\dim\gs_i\neq 15$. Therefore, $\dim(\gh)=8$ is excluded. 

\end{proof}

The previous lemma allows us to describe all possibilities for the pair $(\gg,\gh)$. They are listed in Table \ref{Table2}. \par

\begin{table}[ht]
\centering
\renewcommand\arraystretch{1.4}
\begin{tabular}{|c|c|c|c|}
\hline
n.&	$\gh$		& 	$\gg$			& note														\\ \hline \hline
1&	$2\mathbb R$	&	$3\gs$			&	$\gs^c = \gsl(2,\mathbb C)$									\\ \hline 
2& 	$2\mathbb R$	&	$\gs_1\oplus\gs_2$	&	$\gs_1^c = \gsl(2,\mathbb C), \gs_2=\gsl(2,\mathbb C)_{\mathbb R}$	\\ \hline    
3& 	$\gu(2)$ 		&	$\gs_1\oplus\gs_2$	&	 $\gs_1^c = \gsl(2,\mathbb C), \gs_2^c=\gsl(3,\mathbb C)$			\\ \hline    
4& 	$\so(4)$ 		&	$\gs_1\oplus\gs_2$	& 	$\gs_1^c = \gsl(2,\mathbb C), \gs_2^c=\so(5,\mathbb C)$				\\ \hline  
\end{tabular}
\vspace{0.2cm}
\caption{Possible pairs $(\gg,\gh)$ when $\gg$ is semisimple and not simple.}\label{Table2}
\end{table}

The following proposition rules out the cases n.2 and n.4 of Table \ref{Table2}. We will deal with the remaining pairs separately. 

\begin{proposition} 
The pairs $(\gg,\gh)$ appearing as n.$2$ and n.$4$ in Table \ref{Table2} correspond to homogeneous spaces with no invariant $\G_2$-structures. 
\end{proposition}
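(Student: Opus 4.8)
The plan is to deal with the three pairs one at a time; in each case I would first locate the compact isotropy algebra $\gh$ inside the decomposition of $\gg$ into simple ideals — using Lemma \ref{proj} together with the structure of $\gh$ — and then reach a contradiction, either purely in terms of Lie algebras or by invoking the structure of the $7$-dimensional $\G_2$-module (any invariant $\G_2$-structure embeds the isotropy representation into $\gg_2\subset\so(7)$, and $\G_2$ acts transitively on $S^6\subset\R^7$ with stabilizer $\SU(3)$). Case n.5 is immediate: $\gh\cong\su(3)$ is simple of dimension $8$, whereas every simple ideal of $\gg$ has dimension $3$ or $6$; for each projection $p_k\colon\gg\to\gs_k$ onto a simple factor, $p_k|_\gh$ has kernel an ideal of $\su(3)$, hence $\{0\}$ or $\su(3)$, and since the $p_k$ cannot all vanish on $\gh$ (their direct sum is the identity of $\gg$), at least one of them is injective, forcing $\su(3)$ into a factor of dimension $\le 6$ — absurd.

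Turning to case n.4, we have $\gh\cong\so(4)=\so(3)\oplus\so(3)$ and $\gg=\gs_1\oplus\gs_2$ with $\dim\gs_1=3$. I would first show $\gh\subseteq\gs_2$: the kernel of $p_1|_\gh$ is an ideal of $\so(4)$; it is not $\{0\}$ for dimension reasons, and it cannot be a single $\so(3)$ summand, since then $p_1(\gh)$ would be a $3$-dimensional subalgebra of the simple algebra $\gs_1$, hence all of $\gs_1$, contradicting Lemma \ref{proj}. Thus $p_1|_\gh=0$, so $[\gh,\gs_1]=0$ and $\gs_1$ sits as a $3$-dimensional subspace of $(T_p\M)^\gh$. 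But the subalgebra $\so(4)\subset\gg_2$ has no nonzero fixed vector in $\R^7$: a unit fixed vector would embed $\so(4)$ into the stabilizer $\su(3)$, which has no $6$-dimensional subalgebra (equivalently, $\R^7$ restricted to this $\so(4)$ splits as the sum of an irreducible $3$-dimensional and an irreducible $4$-dimensional module). Hence no invariant $\G_2$-structure can exist, and n.4 is excluded.

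Case n.2 is the delicate one. Here $\gh\cong\R^2$, so $\H$ is a $2$-torus, and $\gg=\gs_1\oplus\gs_2$ with $\gs_1$ a real form of $\gsl(2,\C)$ and $\gs_2=\gsl(2,\C)_\R$, so that $\Sg_2\cong\SL(2,\C)$. If $\gh\subseteq\gs_2$, then $\H$ is a $2$-torus in $\SL(2,\C)$, which is impossible because $\SL(2,\C)$ has no $2$-torus (its maximal compact subgroup $\SU(2)$ has rank one). Otherwise $p_1(\gh)\ne\{0\}$ is an abelian subalgebra of the $3$-dimensional simple algebra $\gs_1$, hence $1$-dimensional, so $\ker(p_1|_\gh)=\R X_1$ with $0\ne X_1=(0,\xi_1)\in\gs_2$; writing $\gh=\R X_1\oplus\R X_2$ with $X_2=(\eta,\xi_2)$ and $0\ne\eta\in\gs_1$, compactness of $\H$ forces $\ad(X_1)|_{T_p\M}$ and $\ad(X_2)|_{T_p\M}$ to be skew-symmetric. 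Examining these operators on $\gs_2=\gsl(2,\C)_\R$, and using $[\xi_1,\xi_2]=0$, one finds that $\ad(\xi_1)$ is semisimple with purely imaginary eigenvalues and that $\xi_2\in\R\xi_1$; after replacing $X_2$ by a suitable combination, $\gh=\R\eta\oplus\R\xi_1$ with $\eta\in\gs_1$ and $\xi_1\in\gs_2$. Then $\exp(\R\eta)\subset\Sg_1$ is a circle acting trivially on $\gs_2$, hence fixing a subspace of $T_p\M$ of dimension $\ge\dim\gs_2-1=5$, and $<7$ since $\exp(\R\eta)$ acts nontrivially on $\gs_1/\R\eta$. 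But a nontrivial circle subgroup of $\G_2$ fixes a subspace of $\R^7$ of dimension $1$ or $3$: this fixed dimension is $1+2\,\#\{\,i:\beta_i\text{ vanishes on the circle}\,\}$, where $\beta_1,\beta_2,\beta_3$ are the positive short roots of $\gg_2$ and $\beta_3=\beta_1+\beta_2$, so at most one $\beta_i$ can vanish. This contradiction completes the proof.

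The step I expect to be the main obstacle is the second alternative of n.2, which mixes Lie theory with the representation theory of $\G_2$: one must use the compactness of $\H$ to recognise that $\gh$ splits as $\R\eta\oplus\R\xi_1$ along $\gs_1\oplus\gs_2$, and then control the weight-zero subspace of the restriction of the $7$-dimensional $\G_2$-module to a circle. A fully computational alternative — writing down the invariant closed $3$-forms and checking directly that none is definite, exactly as in the cases already treated — is also available, but more laborious.
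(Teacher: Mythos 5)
Your proof is correct, and in two of the three cases it takes a genuinely different route from the paper's. For n.4 you essentially reproduce the paper's argument: the projection of $\gh\cong\so(4)$ to the $3$-dimensional factor $\gs_1$ must vanish (its kernel is an ideal of $\so(4)$, and a nonzero image would force $p_1(\gh)=\gs_1$, contradicting Lemma \ref{proj}), so $\gs_1$ lies in the fixed set of the isotropy representation, while $\so(4)\subset\gg_2$ has no nonzero fixed vector in $\R^7$ --- you merely supply a justification of this last fact, which the paper asserts without proof. For n.5 your argument is cleaner and more elementary than the paper's: simplicity of $\su(3)$ forces some projection $p_k|_\gh$ to be injective, and no simple factor of $\gg$ has dimension $\geq 8$; the paper instead shows that all projections onto the $3$-dimensional factors vanish, deduces a fixed set of dimension at least $3$, and then invokes the fact that $\SU(3)\subset\G_2$ fixes only a line in $\R^7$, so your version dispenses with the $\G_2$-representation theory entirely. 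For n.2 both proofs ultimately rest on the weight decomposition of $\R^7$ under a maximal torus of $\G_2$ (weights $0,\pm\beta_1,\pm\beta_2,\pm\beta_3$ with $\beta_3=\beta_1+\beta_2$), but they extract different contradictions: the paper shows that a nontrivial $2$-dimensional module $\gn$ occurs with multiplicity two in $T_p\M$ (once inside $\gu=\gl\oplus\gn$ and once inside $i\gu$), whereas the three nontrivial $2$-dimensional modules of a maximal torus of $\G_2$ are pairwise inequivalent; you instead use compactness of $\H$ (ellipticity of $\xi_1$, whence $\xi_2\in\R\xi_1$) to split $\gh=\R\eta\oplus\R\xi_1$ compatibly with $\gg=\gs_1\oplus\gs_2$, and then note that the one-parameter subgroup generated by $\eta$ fixes a $5$-dimensional subspace, which no nonzero element of a Cartan subalgebra of $\gg_2$ can do. Your route costs an extra normalization step for $\gh$ but yields a clean structural description of the isotropy algebra; the paper's multiplicity argument reaches the contradiction slightly faster without needing $\gh$ to split. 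Both are valid, and your observation that one could alternatively just compute the invariant closed $3$-forms, as in the other cases, is also accurate.
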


\begin{proof} 
In case n.2, the isotropy subgroup $\H$ embeds as a maximal torus in $\G_2$. Hence, the tangent space $T_p\M$ contains three inequivalent real $2$-dimensional $\H$-modules. 
Now, as any abelian subspace of $\gs_1$ is one-dimensional, we see that $\gh$ projects onto a non trivial compactly-embedded subalgebra $\gl$ of $\gs_2$. 
Up to an inner automorphism, we may suppose that $\gl\subset \gu \cong \su(2)$, where $\gs_2 = \gu \oplus i\gu$ is a Cartan decomposition. 
Hence, $\gh \subseteq \gs_1\oplus \gl = \gh\oplus \gq,$ for some $\ad(\gh)$-invariant submodule $\gq$. 
Considering an $\ad(\gh)$-invariant decomposition $\gu = \gl \oplus \gn$, we see that $\gg = \gh \oplus \gq \oplus \gn \oplus i\gu$, 
showing that the isotropy representation of $\gh$ contains $\gn$ with multiplicity two, a contradiction.  

In case n.4, the projection of $\gh$ into $\gs_1$ is not surjective by Lemma \ref{proj} and, therefore, it is trivial. 
Thus, the linear isotropy representation has a fixed point set of dimension at least $3$. On the other hand, the existence of an invariant $\G_2$-structure implies that $\gh$ embeds into $\gg_2$,  
and the fact that $\SO(4)\subset \G_2$ has trivial fixed point set in $\mathbb R^7$ gives a contradiction. 

\end{proof}

In the following propositions, we consider the remaining cases n.1 and n.3.

\begin{proposition} 
In case n.$1$, there exists no invariant closed $\G_2$-structure.
\end{proposition}
\begin{proof}  
Let $\frg = \gs_1\oplus\gs_2\oplus\gs_3$, where $\gs_j^c \cong \gsl(2,\mathbb C)$, and suppose there exists an invariant $\G_2$-structure. 
It then follows that the isotropy $\gh$ can be realized as a maximal abelian subalgebra of $\gg_2$. 
Hence, as an $\gh$-module we have $T_p\M \cong V_o \oplus \bigoplus_{j =1}^3 V_j$, where $V_o$ is a one-dimensional trivial module, 
while $V_j \cong \mathbb R^2$, $j = 1,2,3,$ are mutually inequivalent irreducible submodules. 
This implies that each projection of $\gh$ into the simple factors $\gs_j$ of $\gg$ is not trivial, otherwise the isotropy representation would have a trivial module of dimension at least three. 

If we select $A\coloneqq\mbox{diag}(i,-i)\in\gsl(2,\mathbb C)$, we can suppose that 
\[
\gh = \left\{(\a_1(v)A,\a_2(v)A,\a_3(v)A)\in \gs_1\oplus\gs_2\oplus \gs_3 \st v\in \gh \right\},
\] 
for suitable nonzero $\a_j\in \gh^*$, $j=1,2,3$, with $\a_j\neq \pm\a_k$ if $j \neq k$, and $\sum_j\a_j = 0$, as $\gh$ embeds into $\su(3)\subset\gg_2$. 
We also fix $\gp_j\cong \mathbb R^2$ in $\gs_j$, with $\gs_j = \mathbb R A \oplus \gp_j$ being a Cartan decomposition. 

Now, if we set $V \coloneqq (A,A,A)\in \gg$ and $\gv \coloneqq \mathbb R V$, then 
$\gg = \gh \oplus \gv \oplus \bigoplus_{j=1}^3 \gp_j$ and the tangent space $T_p\M$ identifies with $\gm\coloneqq \gv \oplus \bigoplus_{j=1}^3 \gp_j$. 
Consequently, we have 
\[
(\Lambda^3\gm)^\gh \cong \left(\bigoplus_{j=1}^3\gv\otimes \Lambda^2\gp_j\right) \oplus (\gp_1\otimes\gp_2\otimes\gp_3)^\gh.
\]
We now fix a basis $\{e_1,\ldots,e_7\}$ of $\gm$ with $e_7 \coloneqq V$, $e_1,e_2\in \gp_1$, $e_3,e_4\in \gp_2$ and $e_5,e_6\in \gp_3$ so that 
$\ad(V)|_{\gp_j}= \left(\begin{matrix} 0&-2\\ 2&0\end{matrix}\right)$, for $j=1,2,3$. 
Then, with respect to the dual basis $\{e^1,\ldots,e^7\}$, the forms 
\begin{eqnarray*}
&\gamma_1 \coloneqq e^{127},\quad \gamma_2 \coloneqq e^{347},\quad \gamma_3 \coloneqq e^{567},& \\
&\gamma_4\coloneqq e^{135}-e^{146}-e^{236}-e^{245},\quad \gamma_5 \coloneqq e^{145} + e^{136} + e^{235}-e^{246},&
\end{eqnarray*}
span the space of $\ad(\gh)$-invariant $3$-forms on $\gm$. Any such $\phi$ can be written as $\phi = \sum_{j=1}^5a_j \gamma_j$, for $a_j\in \mathbb R$.
If $\phi$ is closed, then
\[
0 = d\phi(e_7,e_1,e_3,e_5) = -2(\phi(e_2,e_3,e_5)+\phi(e_1,e_4,e_5) + \phi(e_1,e_3,e_6)) = -6a_5. 
\]
Similarly, we get $a_4 = 0$. Therefore, $\phi \in \Span(\gamma_1,\gamma_2,\gamma_3)$ and it cannot be definite.

\end{proof}

\begin{proposition} 
In case n.$3$, there exists no invariant closed $\G_2$-structure.
\end{proposition}
\begin{proof} 
Suppose there exists an invariant closed $\G_2$-structure. 
We let $\gh \coloneqq \gh_s \oplus \mathbb{R} Z$, where $\gh_s \cong \su(2)$ and $Z$ generates the center of $\gh$, and we denote by $\mbox{pr}_j:\gg\to\gs_j$, $j=1,2$, 
the projections onto the simple factors of $\frg$.   
By Lemma \ref{proj}, we may suppose that $\mbox{pr}_1(\gh)\neq \gs_1$. We claim that $\mbox{pr}_1(\gh)\neq \{0\}$. 
Indeed, if $\gh\subset \gs_2$, then the fixed point set of the isotropy representation would be at least $3$-dimensional, while $\H$ embeds as a maximal rank subgroup of  $\G_2$, 
whence its fixed point set is at most one-dimensional. 
Therefore, $\mbox{pr}_1(\gh_s)=\{0\}$ and $A \coloneqq \mbox{pr}_1(Z)  \neq 0$. 
We can also suppose that $A = {\mbox{diag}}(i,-i)\in \gsl(2,\mathbb C)$, with $\gs_1 = \mathbb R A \oplus \gp$ a Cartan decomposition. 

We now claim that $\mbox{pr}_2(Z)\neq 0$. 
Indeed, otherwise $Z=A\in \gs_1$ would act trivially on a five-dimensional subspace of the tangent space $T_p\M\cong\gg/\gh$.  
The torus $\mathrm{T}^1$ generated by $Z$ embeds into $\G_2$, hence into $\SU(3)\subset\G_2$ up to conjugation. 
As any non-trivial element of $\SU(3)$ has a fixed point set in $\mathbb{C}^3$ of complex dimension at most one, we see that the fixed point set of $\mathrm{T}^1$ in $\R^7$ has real dimension at most three. 
This gives a contradiction. 

The ideal $\gs_2$ is isomorphic to one of $\su(3), \su(1,2), \gsl(3,\mathbb{R})$, and we claim that the last possibility cannot occur. 
Indeed $\gh_s\cong \so(3) \subset \gs_2 \cong \gsl(3,\mathbb{R})$ would have a trivial centralizer in $\gs_2$, while $\mathrm{pr}_2(Z)\neq 0.$ 
Therefore, we can suppose that $\gh_s = \left\{\mbox{diag}(0,A)\in \gsl(3,\mathbb C) \st A\in \su(2)\right\}$ and $B \coloneqq \mbox{pr}_2(Z) = \mbox{diag}(2ia,-ia-ia)\in \gsl(3,\mathbb C)$, 
for some nonzero $a\in \mathbb R$. 
Then, we can fix an $\ad(\gh)$-invariant decomposition $\gs_2 = (\gh_s\oplus\mathbb R B) \oplus \gn $, 
and we may consider some nonzero $V\in \mbox{Span}\{A,B\}$ so that 
\[
\gg = \gh \oplus \gv  \oplus \gp \oplus \gn,\quad \gv \coloneqq \mathbb{R} V,
\] 
is an $\ad(\gh)$-invariant decomposition of $\gg$ and $T_p\M$ can be identified with $\gm \coloneqq \gv \oplus \gp \oplus \gn$.  
We choose 
\[
V = \mbox{diag}(i,-i)\oplus\mbox{diag}(2bi,-bi,-bi), \quad b\neq 0,a.  
\]
We let $e_7\coloneqq V$, and we select a basis $\{e_1,\ldots,e_4\}$ of $\gn$ and a basis $\{e_5,e_6\}$ of $\gp$ so that 
\[
\begin{split}
& [e_1,e_2]_\gm = \frac{\eta}{b-a}\, e_7,\quad  [e_3,e_4]_\gm =  \frac{\eta}{b-a}\, e_7,\\  
& [e_5,e_6]_\gm = \frac{2a \varepsilon}{a-b} e_7,\quad  [e_i,e_j]_\gm = 0,~i=1,2, j=3,4,
\end{split}
\]
\[
\ad(e_7)|_\gp = \ad(Z)|_\gp =  \left(\begin{matrix} 0&-2\\2&0\end{matrix}\right),
\quad 
\ad(e_7)|_{\gn}  = \left(\begin{matrix} 0&-3b&0&0\\3b&0&0&0\\ 0&0&0&-3b\\ 0&0&3b&0\end{matrix}\right) = \frac{b}{a}\ad(Z)|_\gn, 
\]
where $\varepsilon,\eta = \pm 1$ according to the Lie algebras $\gs_1,\gs_2$ being of compact or non-compact type. 
We have the following $\ad(\gh)$-invariant decomposition
\[
(\Lambda^3\gm)^\gh = (\gv\otimes\Lambda^2\gp)\oplus (\gv\otimes(\Lambda^2\gn)^\gh) \oplus (\gp\otimes\Lambda^2\gn)^\gh. 
\]
A straightforward computation shows that 
\[
\dim (\gp\otimes\Lambda^2\gn)^\gh  = 
\begin{cases}
2, \quad \mbox{if } a =\frac13,\\
0, \quad \mbox{otherwise}. 
\end{cases}
\]
Let us denote by $\{e^1,\ldots,e^7\}$ the dual $1$-forms. When $a\neq\frac13$, a basis of $(\Lambda^3\gm)^\gh$ is given by  
\[
\gamma_1\coloneqq e^{567},\quad \gamma_2 \coloneqq e^{127}+e^{347}.  
\]
Clearly, these forms do not span any definite 3-form. When $a=\frac13$, a basis of invariant 3-forms is given by
\[
\gamma_1,\quad \gamma_2,\quad \gamma_3 \coloneqq e^{135}+e^{146}+e^{236}-e^{245},\quad \gamma_4 \coloneqq e^{136}-e^{145}-e^{246}-e^{235}. 
\]
In this case, the generic $\ad(\gh)$-invariant $3$-form $\phi$ can be written as $\phi=\sum_{j=1}^4c_j \gamma_j$, for some $c_j\in \mathbb R$. 
If $\phi$ is closed, then
\[
0 = d\phi(e_7,e_5,e_1,e_3) = -2\,\phi(e_1,e_3,e_6) -3b\,\phi(e_2,e_3,e_5) -3b\,\phi(e_1,e_4,e_5)   = (6b-2)\, c_4, 
\]
whence $c_4=0$, as $b\neq a=\frac13$. Similarly, from $d\phi(e_1,e_3,e_6,e_7) = 0$, we obtain $c_3=0$. It then follows that $\phi$ is not definite. 

\end{proof}


\subsection{Case $\gg$ not semisimple}  
In this last case, we have $\gg = \gg_s \oplus \gz$, with $\gz$ non-trivial. 
We start noting that for every $Z\in \gz$ the $2$-form $\oZ \coloneqq\iota_{{\hZ}}\f$ is $\G$-invariant and closed, 
as $d\oZ = \mathcal{L}_\hZ \f - \iota_\hZ d\f = 0$. 
Consequently, if $X,Y\in\gg_s$ and $V\in \gz$, we have 
\[
0 = d\oZ \left(\hX,\hY,\hV\right) =  \oZ \left( \left[\hX,\hY\right],\hV \right). 
\]
As $\gg_s$ is semisimple, it satisfies $\gg_s = [\gg_s,\gg_s]$. Therefore 
\beq\label{fund}
\oZ \left(\hA,\hV\right) = 0,\quad \forall~A\in \gg_s,~V\in \gz.
\eeq

Let $\G_s$ denote the connected Lie subgroup of $\G$ with Lie algebra $\gg_s$. 
The $\G_s$-orbit $\mathcal{O} \coloneqq \G_s\cdot p \cong \G_s/\H$ is a proper submanifold of $\M$, and we may select a nonzero $Z\in\gz$ so that $\hZ_p \not\in T_p\mathcal{O}$.  
We claim that the pull-back of $\oZ$ to $\mathcal{O}$ is an invariant symplectic form. 
Indeed, if $X\in \gg_s$ satisfies $\oZ\left(\hX_p,\hY_p\right) = 0$ for all $Y\in \gg_s$, then from \eqref{fund} we see that $\hX_p$ must lie in the kernel of $\oZ|_p$. 
Thus, $\hX_p$ must be a multiple of $\hZ_p$, hence zero. Therefore, $\dim\mathcal{O}\in\{2,4,6\}$.  

\begin{lemma} 
The orbit $\mathcal{O}$ has dimension six.
\end{lemma}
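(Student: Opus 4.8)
The plan is to rule out $\dim\mathcal{O}=2$ and $\dim\mathcal{O}=4$, by playing a dimension count against the fact that $\H$ acts trivially on the centre $\gz$.

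First I would record the arithmetic. Since $\gh\subseteq\gg_s$ by Lemma~\ref{proj}, we have $\dim\mathcal{O}=\dim\gg_s-\dim\gh$, while $7=\dim\M=\dim\gg-\dim\gh=\dim\gg_s+\dim\gz-\dim\gh$; hence
\[
\dim\gg_s=\dim\mathcal{O}+\dim\gh,\qquad \dim\gz=7-\dim\mathcal{O},
\]
so under the contradiction hypothesis $\dim\gz\in\{3,5\}$. Now comes the key point: as $\gz$ is central, $\ad(\gh)|_\gz=0$, and therefore in the $\H$-module splitting $T_p\M\cong\gg/\gh\cong(\gg_s/\gh)\oplus\gz$ the summand $\gz$ is fixed pointwise by $\H$. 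Since the isotropy representation embeds $\H$ into $\G_2\subset\SO(7)$, this yields $\dim(\R^7)^\H\geq\dim\gz\geq 3$.

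Next I would run through the possible isotropy algebras. By Lemma~\ref{sub}, together with the fact that a compactly embedded abelian subalgebra of $\gg_2$ has dimension at most $\mathrm{rk}\,\gg_2=2$, the algebra $\gh$ is one of $\{0\}$, $\R$, $\R^2$, $\so(3)$, $\gu(2)$, $\so(4)$, $\su(3)$ (the value $\dim\gh=14$ is impossible, since $\G_2$ has no nonzero fixed vector on $\R^7$), and for each I would bound $\dim(\R^7)^\H$ from the branching of the $7$-dimensional representation. A nontrivial torus in $\G_2$ is conjugate into the maximal torus, which is a maximal torus of a subgroup $\SU(3)\subset\G_2$ acting on $\R^7=\R\oplus\C^3$ with the three weights on $\C^3$ summing to zero; hence a circle has $\dim(\R^7)^\H\leq 3$ and the $2$-torus has $\dim(\R^7)^\H=1$. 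Every $\gu(2)\subset\gg_2$ lies (up to conjugacy) in $\su(3)$ or in $\so(4)$, and in either case $\dim(\R^7)^\H\leq 2$; for $\so(4)\subset\gg_2$ one gets $\dim(\R^7)^\H=0$ and for $\su(3)\subset\gg_2$ one gets $\dim(\R^7)^\H=1$; finally, for $\so(3)$ the maximum of $\dim(\R^7)^\H$ is $3$, attained by the $\su(2)$-summand of $\so(4)\subset\gg_2$ that acts trivially on the $3$-dimensional factor of $\R^7$. Consequently $\dim(\R^7)^\H\geq 3$ excludes $\dim\gh\in\{2,4,6,8\}$, so $\gh$ is $\{0\}$, $\R$, or $\so(3)$.

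To finish I would reinsert this into the dimension count: if $\dim\gh\in\{0,1,3\}$ and $\dim\mathcal{O}\in\{2,4\}$, then $\dim\gg_s=\dim\mathcal{O}+\dim\gh\in\{2,3,4,5,7\}$, and since $\gg_s$ is a nonzero semisimple Lie algebra this forces $\dim\gg_s=3$, hence $\dim\mathcal{O}=2$, $\dim\gh=1$ and $\dim\gz=5$. But then $\H$ is a circle, so $\dim(\R^7)^\H\leq 3<5$, contradicting $\dim(\R^7)^\H\geq\dim\gz$. Therefore $\dim\mathcal{O}\notin\{2,4\}$, and recalling that $\dim\mathcal{O}\in\{2,4,6\}$ we conclude $\dim\mathcal{O}=6$. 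The step I expect to demand the most care is the fixed-point bookkeeping of the third paragraph: one has to be certain that every conjugacy class of $\SO(3)$ and of $\U(2)$ inside $\G_2$ has been taken into account and that the branching of the $7$-dimensional representation is used correctly. Granting this, the remainder is elementary arithmetic with the dimensions of semisimple Lie algebras.
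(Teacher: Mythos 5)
Your argument is correct, but it takes a genuinely different route from the paper's. The paper rules out $\dim\mathcal{O}=2$ by pointwise linear algebra: for a definite $3$-form, $\iota_{\hX}\f$ is non-degenerate on ${\rm Span}\{\hX_p\}^{\perp}$, yet by \eqref{fund} it vanishes on the $5$-dimensional subspace $\hat\gz|_p$, which would force an isotropic subspace of dimension $4$ inside a $6$-dimensional non-degenerate space. It rules out $\dim\mathcal{O}=4$ by invoking the structure of homogeneous symplectic manifolds of semisimple groups: the isotropy algebra of the symplectic orbit must contain a maximal abelian subalgebra of dimension ${\rm rk}(\gg_s^c)\geq 2$, so $\H$ contains a $2$-torus whose fixed-point set in $T_p\M$ is at least $3$-dimensional, contradicting the $1$-dimensional zero-weight space of a maximal torus of $\G_2$ on $\R^7$. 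You replace both steps by the single elementary observation that $\gz$ is fixed pointwise by the isotropy representation, so that $\dim(\R^7)^{\H}\geq\dim\gz=7-\dim\mathcal{O}$, and you then close the argument with Lemma \ref{sub}, the fixed-point dimensions of subalgebras of $\gg_2$ on $\R^7$, and the arithmetic of dimensions of semisimple Lie algebras. Your route is more uniform and uses the symplectic structure of $\mathcal{O}$ only through the parity of its dimension (this input is essential: without it the case $\dim\gg_s=3$, $\gh=\{0\}$, $\dim\mathcal{O}=3$ would survive), at the price of the conjugacy-class bookkeeping you flag yourself. That bookkeeping can in fact be compressed: each of $\R^2$, $\gu(2)$, $\so(4)$, $\su(3)$ contains a maximal torus of $\gg_2$ and hence has fixed-point set of dimension at most $1$, while the $\so(3)$ case need not be analyzed at all since it is eliminated by your final dimension count rather than by its fixed-point set; the only delicate estimate genuinely needed is that a circle in $\G_2$ fixes at most a $3$-plane, which follows from the pairwise independence of the three weights of $\C^3$ under the maximal torus of $\SU(3)$. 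Both proofs ultimately lean on fixed-point sets of tori in $\G_2$, but the paper reaches its $2$-torus through the symplectic geometry of the orbit, whereas you reach your trivial summand through the center of $\gg$.
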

\begin{proof} 
We first prove that $\dim \mathcal O\geq 4$. Let $\hat\gz|_p\coloneqq \left\{\hV_p\in T_p\M \st V\in\gz\right\}$.  
If $\dim \mathcal O=2$, then we have $\dim  \hat\gz|_p =  5$. 
Given a nonzero $X\in \gg_s$, by \eqref{fund} we know that $\iota_{\hX}\f(V_1,V_2) = 0$ for all $V_1,V_2\in \hat\gz|_p$. 
Now $\iota_{\hX}\f$ is non-degenerate on $U \coloneqq {\mathrm{Span}}\{\hX_p\}^\perp$ and it vanishes on the subspace $U \cap \hat\gz_p$, which has dimension at least $4$, a contradiction.

Suppose now that $\dim\mathcal O=4$.  
Since $\gh\subset\gg_s$ and $\mathcal O$ is symplectic, $\gh$ is not trivial (see e.g.~\cite{BFR}), and a maximal abelian subalgebra of $\gh$ has dimension ${\rm{rk}}(\gg_s^c)\geq 2$. 
It follows that there exists a $2$-torus $\rm T^2$ in $\H$ whose fixed point set in $T_p\M$ has dimension $3$. On the other hand, $\rm T^2$ embeds as a maximal torus of $\G_2$, 
which has a one-dimensional fixed point set in $\mathbb{R}^7,$ a contradiction.

\end{proof}

Since the $\G$-action is irreducible and $\dim\mathcal{O}=6$, we necessarily have $\dim(\gz) = 1$, so that $\gz=\mathrm{Span}\left\{Z\right\}$. 
We claim that $\hZ_p$ belongs to the orthogonal complement of $T_p\mathcal{O}$ in $T_p\M$ with respect to the invariant Riemannian metric $g_\f$. 
Indeed, we may consider an $\ad(\gh)$-invariant decomposition $\gg_s = \gh \oplus \gm$, where $\gm\cong T_p\mathcal O$. 
The invariant symplectic form on $\mathcal O$ corresponds to an $\Ad(\H)$-invariant symplectic form on $\gm$ which can be written as $B(\ad(W)\cdot,\cdot)$ for a unique $W\in \gg_s$, 
where $B$ denotes the Cartan-Killing form of $\gg_s$. Moreover, $\gh$ coincides with the centralizer of $W$ in $\gg_s$ (see, for instance, \cite{BFR}).  
Consequently, we have $\gm^{\H} = \{0\}$.  
This implies that the orthogonal projection of $\hZ_p$ on $T_p\mathcal{O}$, being invariant under $\ad(\gh)$, is trivial. 
 
Let $\psi$ denote the closed $\G_s$-invariant 3-form on $\mathcal{O}$ obtained by pulling back the invariant closed $\G_2$-structure $\f$ on $\M$. 
To conclude the proof of the main theorem, we need to show that the pair $(\oZ,\psi)$ defines a $\G_s$-invariant SU(3)-structure 
on the six-dimensional homogeneous space $\mathcal{O}=\G_s/\H$. Since both $\oZ$ and $\psi$ are closed and $\f$ is not parallel, the SU(3)-structure will be strictly symplectic half-flat, namely  
$d*\psi\neq0$, where $*$ is the Hodge operator relative to the metric induced by $(\oZ,\psi)$. 
In particular, the orbit $\mathcal O$ is non-compact (see \cite[Prop.~4.2]{PR2}).

Now, identifying the invariant closed $\G_2$-structure $\f$ on $\M$ with the corresponding $\ad(\gh)$-invariant definite 3-form $\f$ on $\gm \obot \gz \cong T_p\M,$ we see that 
$\f = \oZ \W \eta +\psi,$ where $\eta\in\gz^*$ is dual to $Z$. Since $\f$ is definite, the pair of $\ad(\gh)$-invariant forms $(\oZ,\psi)$ on $\gm$ defines an SU(3)-structure. 

Summing up, the orbit $\mathcal{O}=\G_s\cdot p$ is a non-compact $\G_s$-homogeneous six-dimensional manifold endowed with an invariant strictly symplectic half-flat SU(3)-structure. 
By the classification result \cite[Thm.~5.1]{PR2}, we have that the pair $(\G_s,\H)$ is (locally) isomorphic to either $(\SO(4,1),\U(2))$ or $(\SU(2,1),\mathrm{T}^2)$.  
We recall that the classification of all invariant strictly symplectic half-flat SU(3)-structures on these homogeneous spaces is also given in \cite[Thm.~5.1]{PR2}.

\end{document}